\newcommand{\mcm}[3]{\newcommand{#1}[#2]{{\ensuremath{#3}}}} 
\mcm{\tuple}{1}{\langle #1 \rangle}
\mcm{\name}{1}{\ulcorner #1 \urcorner}
\mcm{\Nbb}{0}{\mathbb{N}}
\mcm{\Zbb}{0}{\mathbb{Z}}
\mcm{\Rbb}{0}{\mathbb{R}}
\mcm{\Cbb}{0}{\mathbb{C}}
\mcm{\Qbb}{0}{\mathbb{Q}}
\mcm{\Bcal}{0}{\cal B}
\mcm{\Ccal}{0}{\cal C}
\mcm{\Dcal}{0}{\cal D}
\mcm{\Ecal}{0}{\cal E}
\mcm{\Fcal}{0}{\cal F}
\mcm{\Gcal}{0}{\cal G}
\mcm{\Hcal}{0}{\cal H}
\mcm{\Ical}{0}{\cal I}
\mcm{\Jcal}{0}{\cal J}
\mcm{\Kcal}{0}{\cal K}
\mcm{\Lcal}{0}{\cal L}
\mcm{\Mcal}{0}{\cal M}
\mcm{\Ncal}{0}{\cal N}
\mcm{\Ocal}{0}{{\cal O}}
\mcm{\Pcal}{0}{{\cal P}}
\mcm{\Qcal}{0}{{\cal Q}}
\mcm{\Rcal}{0}{{\cal R}}
\mcm{\Scal}{0}{{\cal S}}
\mcm{\Tcal}{0}{{\cal T}}
\mcm{\Ucal}{0}{{\cal U}}
\mcm{\Vcal}{0}{{\cal V}}
\mcm{\Xcal}{0}{{\cal X}}
\mcm{\Ycal}{0}{{\cal Y}}
\mcm{\Mfrak}{0}{\mathfrak M}
\DeclareMathOperator{\Cl}{Cl}
\mcm{\restric}{0}{\upharpoonright}
\mcm{\upset}{0}{\uparrow}
\mcm{\onto}{0}{\twoheadrightarrow}
\mcm{\smallNbb}{0}{{\small \mathbb{N}}}
\DeclareMathOperator{\preop}{op}
\mcm{\op}{0}{^{\preop}}
\newcommand{\se}{\subseteq}
\newcommand{\theoremize}[2]{\newaliascnt{#1}{thm} \newtheorem{#1}[#1]{#2} \aliascntresetthe{#1}}
\theoremstyle{plain}
\newtheorem{thm}{Theorem}[section]
\theoremstyle{definition}
\theoremstyle{plain}
\title{\scshape Even an infinite bureaucracy eventually makes a decision}
\author{Johannes Carmesin}
\newcommand{\sm}{\setminus}
\begin{document}
 
\maketitle

\begin{abstract}
We show that the fact that a political decision filtered through a finite tree of committees
gives a determined answer generalises in some sense to infinite trees.
This implies a new special case of the Matroid Intersection Conjecture.
\end{abstract}

\section{Introduction}

To understand our main result, it might help to have the following real world situation in mind.
Suppose you have two political parties, the blue party $B$ and the red party $R$.
Both parties want to ratify a certain program.
For this they have to get the majority in a certain committee but the members of this committee are elected in other committees whose members are elected in still other committees and so on. If a vote is tide no member is sent.  
This is modelled by a rooted tree $T$
that is directed towards the root. The leaves\footnote{
In our context, a \emph{leaf} is a vertex that has no incoming edges.} are the members of $B$ and $R$ and the other nodes are the committees. 
The members that are allowed to vote in a certain committee come from the upward neighbours of that node. In particular, the final vote takes place in the root-committee. 
It is clear that if $T$ is finite then either
$B$ can ratify their program or $R$ can prevent them from doing that. 
The objects we are interested in in this paper can be thought of as infinite analogues of witnesses to these
two possibilities.

Given an edge set $X$ and a vertex $v$, the \emph{accumulation $A(v,X)$} of $X$ at $v$
is the difference between the number of edges in $X$ pointing to $v$ and the 
number of edges in $X$ pointing away from $v$.
Given two edge sets $X$ and $Y$ and a vertex $v$, then the \emph{accumulation }
from $X$ to $Y$ at $v$ is $A(v,X,Y)=A(v,X)-A(v,Y)$.
For an edge set $X$, we denote by $V(X)$ the set of those vertices that are incident with an edge of $X$.

The \emph{blue flow $b(X)$} to an edge set $X$ is the
union of the edge sets of those paths starting at a blue leaf 
all of whose interior vertices are not incident with an edge from $X$. 
A \emph{red blockage} is a rayless edge set $X$ such that
$V(X)$ does not meet $B$ and for every $v\in (V(X)+r)\sm R$ we have $A(v,X,b(X))\geq 0$.
A red blockage $X$ is \emph{strong} if $A(r,X,b(X))\geq 1$.
Similarly, one defines red flows $r(X)$ and blue blockages, and strong blue blockages.
In the real world situation, a red blockage witnesses that the red party can prevent the blue party from ratifying their program. Whereas, a strong blue blockage witnesses that the blue party can ratify their program.
The raylessness corresponds to the idea that the final decision should not rely on a 
chain of decisions stretching back to infinity with no `first cause'. 
The main result of this paper is the following.

\begin{thm}\label{blockage}
 Let $T$ be a locally finite rooted tree with disjoint sets $B$ and $R$ of leaves.
Then either there is a strong blue blockage or a red blockage.
\end{thm}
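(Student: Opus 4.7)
My plan is to carry out a transfinite recursion that decides, stage by stage, which vertices ``vote blue,'' and then to extract the required blockage in two cases depending on whether the root votes blue. Concretely, define ascending vertex sets $B_0 \subseteq B_1 \subseteq \dots$ by $B_0 := B$ and, for $\alpha > 0$, $v \in B_\alpha$ iff a strict majority of $v$'s (finitely many, by local finiteness) children already lie in $B_{<\alpha} := \bigcup_{\beta < \alpha} B_\beta$. The chain stabilises at some ordinal $\alpha^*$, yielding a set $B_\infty$ with the fixed-point property that $v \in B_\infty$ iff a strict majority of $v$'s children lie in $B_\infty$. Dually, $U := V(T) \setminus B_\infty$ contains every red leaf, misses every blue leaf, and has the property that each $v \in U$ has at least half its children in $U$. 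Define the rank $\mathrm{rk}(v) := \min\{\alpha : v \in B_\alpha\}$ for $v \in B_\infty$.

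If $r \in B_\infty$, I would produce a strong blue blockage $X$ by descending from $r$: at each $v \in B_\infty$, include in $X$ the edges from $v$ to those of its children that also lie in $B_\infty$. Raylessness follows because along any branch of $X$ the rank $\mathrm{rk}$ strictly decreases and ordinals are well-founded, and the strict-majority condition at each $v \in B_\infty$ translates directly into the accumulation inequalities $A(v, X, r(X)) \geq 0$ at vertices of $V(X) \setminus B$, with the strict inequality $\geq 1$ at $r$.

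If $r \notin B_\infty$, the naive candidate for a red blockage is the set of edges joining $U$-vertices: the local majority property gives the accumulation inequality $A(v, X, b(X)) \geq 0$ at every $v \in (V(X) + r) \setminus R$. However, this candidate may contain rays along branches on which blue never gains a foothold, so raylessness is not automatic. I expect this second case to be the main difficulty. My approach would be either (a) a secondary transfinite recursion assigning a ``red rank'' to $U$-vertices witnessed by descents that terminate at red leaves, allowing one to prune the candidate to a well-founded skeleton while keeping enough red-voting children at each vertex; or (b) a Zorn-style maximality argument on the poset of rayless candidates ordered by inclusion, showing a maximal element must in fact satisfy the accumulation inequalities at the root and all its vertices. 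In either approach, local finiteness of $T$ and a König-type argument to eliminate rays will be crucial to produce a genuinely rayless red blockage.
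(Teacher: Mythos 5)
Your proposal has two genuine gaps, one in each case. In the second case ($r \notin B_\infty$), the candidate $X$ consisting of all edges joining $U$-vertices does not in general satisfy the accumulation inequality you claim. Take $T$ with root $r$, one child $v$ of $r$, and two children $\ell_1 \in B$, $\ell_2 \in R$ of $v$. Then $v \notin B_\infty$ (one of two children is not a strict majority), so $r \notin B_\infty$ and $X = \{\ell_2 v, vr\}$. Here $b(X) = \{\ell_1 v\}$, so $A(v,X,b(X)) = (1-1) - 1 = -1 < 0$. The problem is that at a tied vertex the outgoing edge must be omitted from $X$ while enough incoming edges are kept to cancel the blue flow; a one-sided vertex recursion recording only ``blue majority or not'' cannot see this, because it conflates ``red wins'', ``tied'' and ``undetermined''. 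The paper instead works with edge sets and signed accumulations throughout: it defines overflow operators $\bar b$ and $\bar r$ and takes a fixed point of the composition $\bar b \circ \bar r$ (Lemma 2.6), which is what correctly encodes the interaction of the two colours at tied committees. Relatedly, in your first case the raylessness claim is false as stated: if every vertex on a ray has two blue leaf children, all ranks equal $1$ and nothing decreases along the ray, so your $X$ contains a ray. This is repairable by only keeping edges from children lying in $B_{<\mathrm{rk}(v)}$, but the analogous repair is not available in case 2.

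The deeper missing ingredient is precisely the one you flag and defer: pruning an infinite candidate to a rayless set while preserving all the accumulation inequalities. This is the technical heart of the paper (it is why, as the paper notes, no compactness argument suffices), and neither of your suggested strategies is carried out or obviously works. In particular, for option (b), the poset of rayless candidates is not closed under unions of chains, and a maximal rayless subset of your candidate has no reason to satisfy the accumulation inequalities at every vertex. The paper handles this with a notion of \emph{legal} edge sets (sets with no ``illegal'' leafless subforest), proves that overflows are legal (Lemma 2.8), and then shows that any legal set can be cut down to a rayless subset preserving the relevant inequalities (Lemma 2.9); one then has to verify that the pruned set still induces the same opposing flow, which is done via the fixed-point property. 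Without an argument of this kind, your case 2 (and, after the tie issue is fixed, the raylessness in both cases) remains open, so the proposal does not yet constitute a proof.
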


If we leave out the assumption that blockages are rayless, then this weaker version can easily be proved by compactness. 
But it appears that no such compactness argument will work in our situation and we have to introduce some new ideas.

Unlike for finite trees, the ``or'' in \autoref{blockage} is not exclusive in general, see \autoref{fig:blockage}.

\begin{figure} [htpb]   
	\begin{center}
		\includegraphics[width=7cm]{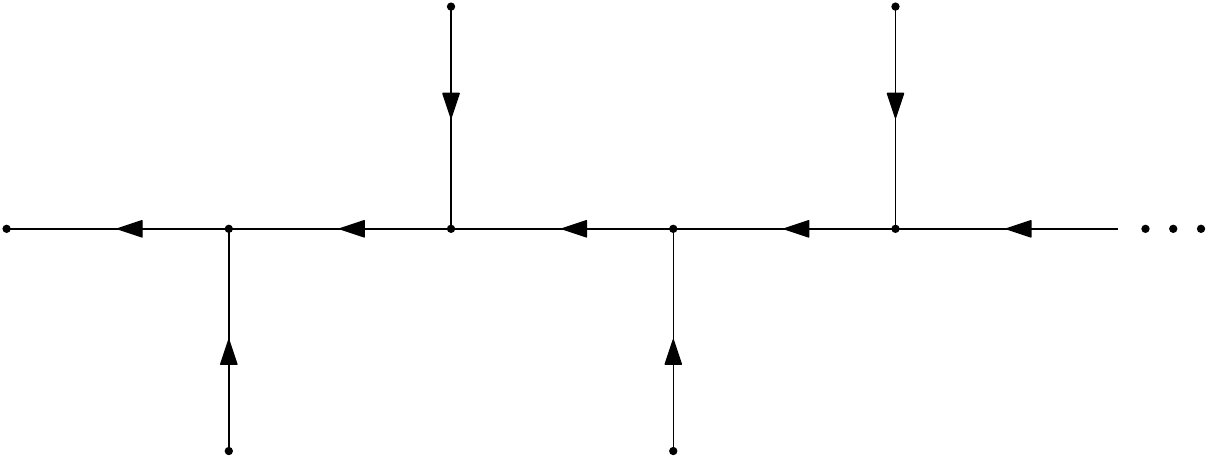}
		\caption{Let $B$ be set of bottom vertices of this tree $T$ and $R$ be the set set of top vertices. Then $(T,B,R)$ has both a strong blue blockage and a red blockage.}
		\label{fig:blockage}
	\end{center}
\end{figure}

We can use \autoref{blockage} to make some progress on the Matroid Intersection Conjecture, which says that any two infinite matroids $M$ and $N$ on a 
common ground set $E$ have a common independent set $I$ admitting a partition $I
= J_M \cup J_N$ such that $\Cl_{M}(J_M) \cup \Cl_{N}(J_N) = E$. This conjecture is known to imply the Infinite Menger theorem \cite{union2}, which had been conjectured by 
Erd\H{o}s and had been open for about 50 years until it was finally proved by Aharoni and Berger \cite{AharoniBerger}.
The Matroid Intersection Conjecture also implies tree-packing and tree-covering theorems for infinite graphs \cite{BC:PC}. 
For an introduction to this conjecture see \cite{BC:PC}. 
We use \autoref{blockage} to prove the Matroid Intersection Conjecture for two matroids $M$ and $N$ that can be decomposed into large finite uniform matroids along a tree of 2-separations, see \autoref{case_int} for details.

The paper is organised as follows.
After proving \autoref{blockage} in \autoref{sec:proof},
we deduce a special case of Matroid Intersection from it in \autoref{sec:int}.

\section{Proof of \autoref{blockage}.}\label{sec:proof}

Throughout, notation and terminology for graphs are those of~\cite{DiestelBook10}, and for matroids those of~\cite{Oxley,matroid_axioms}. Throughout this section, we fix a tree $T$ directed to its root $r$ with edge set $E$ and disjoint sets $B$ and $R$ of leaves of $T$.
Edges are ordered pairs $st$ pointing from $s$ to $t$.
Given an edge set $X$, the set $ter(X)$  of \emph{terminal vertices} of $X$ consists of those vertices $s$ such that there is no vertex $t$ such that $st\in X$.

\begin{dfn}\label{def:overflow}
The \emph{blue overflow $\bar b(X)$} of $X\se E$ is defined by the following recursive construction. 
We start by taking $Y_0$ to be the set of all edges in $E\sm X$ whose starting vertex is in $B$.
Assume that for all $\gamma<\alpha$ the set $Y_\gamma$ is defined.
First assume that $\alpha=\beta+1$ is a successor. If for all $st\in E\sm (X\cup Y_\beta)$, we have 
$A(s,Y_\beta,X)\leq 0$, we stop and let $\bar b(X)=Y_\beta$. 
Otherwise we can pick some $st\in E\sm (X\cup Y_\beta)$ with $A(s,Y_\beta,X)\geq 1$ and let
$Y_\alpha=Y_\beta+st$.
If $Y_\alpha$ is a limit, we simply let $Y_\alpha=\bigcup_{\beta<\alpha} Y_\beta$.
\end{dfn}

Clearly, this construction terminates.
As every vertex has at most one outgoing edge, we get the following.

\begin{rem}\label{flow}
$\bar b(X)$ does not depend on the choices made during the construction. Moreover,
$A(v,\bar  b(X),X)\geq 0$ for every $v\in V(\bar  b(X))\sm (B\cup ter(\bar  b(X)))$.
\qed
\end{rem}

\begin{rem}\label{smallest}
$\bar b(X)$ is the smallest set of edges containing all edges starting at blue vertices such that 
for all $st\in E\sm (X\cup \bar b(X))$ we have 
$A(s,\bar b(X),X)\leq 0$.
\qed
\end{rem}

Similar to $\bar b(X)$, one defines the red overflow $\bar r(X)$. 
If $X$ is a red blockage, then  $\bar b(X)=b(X)$.
In general, the overflow $\bar b(X)$ includes the flow $b(X)$.

\begin{lem}\label{rem1}
If $A(v,X,\bar r(X))\geq 0$ for all $v\in (V(X)\sm (B\cup ter(X)))+r$, then there is some
blue blockage $X'$ included in $X$ such that $A(r,X',r(X'))=A(r,X,\bar r(X))$.
\end{lem}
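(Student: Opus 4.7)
The plan is to construct $X'$ explicitly by pruning from $X$ the edges at vertices through which the red overflow $\bar r(X)$ propagates, so that the flow $r(X')$ matches $\bar r(X)$ in the region contributing to $A(r,\cdot)$. Two preliminary observations drive this. First, the hypothesis forces $V(X)\cap R=\emptyset$: if $v\in V(X)\cap R$, then $v$'s unique outgoing edge lies in $X$, so $v\in V(X)\sm(B\cup ter(X))$, and the hypothesis at $v$ would demand $-1=A(v,X)-A(v,\bar r(X))\ge 0$. Second, as a consequence, $r(X)\se\bar r(X)$: any path witnessing an edge of $r(X)$ starts at a red leaf outside $V(X)$ with interior disjoint from $V(X)$, and each such edge is added during the construction of \autoref{def:overflow}.

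Next, since $X\cap\bar r(X)=\emptyset$ by construction, any $v\in V(X)\sm\{r\}$ whose outgoing edge lies in $\bar r(X)$ must be terminal in $X$. Let $W$ denote the set of such vertices, and set
\[ X' \;=\; X \sm \{e\in X : e\text{ is incident to some }v\in W\}, \]
so that $W\cap V(X')=\emptyset$. The intended effect is that every propagation path of $\bar r(X)$ has its interior in $W$, hence disjoint from $V(X')$, placing $\bar r(X)\se r(X')$. Combined with the hypothesis ruling out extra red paths reaching $r$, this yields $A(r,X',r(X'))=A(r,X)-A(r,\bar r(X))=A(r,X,\bar r(X))$, since edges of $X$ at $r$ are preserved ($r\notin W$). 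The blockage condition at any non-terminal $v\in V(X')$ follows by combining the hypothesis with $A(v,r(X'))\le A(v,\bar r(X))$: both accumulations reduce to incoming sums, as $v$'s outgoing edge, being in $X'\se X$, appears in neither $r(X')$ nor $\bar r(X)$.

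The main obstacle lies in verifying the blockage condition at the new terminal vertices of $X'$---children $c$ of $W$-vertices whose outgoing edge has been removed but which still carry incoming $X'$-edges. The hypothesis is silent there, so this step will require a more delicate argument, likely iterating the pruning transfinitely (taking intersections at limits) or exploiting the forced equality $A(v,X)=A(v,\bar r(X))$ that \autoref{flow} combined with the hypothesis yields at every non-terminal vertex of both $X$ and $\bar r(X)$ outside $B\cup R$. Raylessness of $X'$ will be inherited from $X$, which is the natural context for this lemma, or else handled by a final truncation step.
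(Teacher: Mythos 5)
Your construction of $X'$ is exactly the paper's: your set $W$ is the set of vertices whose unique outgoing edge lies in $\bar r(X)$, and deleting the $X$-edges incident to $W$ is the same as deleting the paper's ``bad'' edges. The preliminary observations ($V(X)\cap R=\emptyset$, $W\se ter(X)$) are correct, and the intended identity $r(X')=\bar r(X)$ is the right key step (you argue only $\bar r(X)\se r(X')$; the reverse inclusion, which you need for the exact equality $A(r,X',r(X'))=A(r,X,\bar r(X))$ at the root, is the part the paper dispatches via \autoref{smallest}: any path from a red leaf avoiding $V(X')$ in its interior passes only through vertices of $W$ or vertices outside $V(X)$, and its edges are forced into $\bar r(X)$ one by one).

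The genuine problem is that you declare a ``main obstacle'' and then do not resolve it --- and the obstacle is a misdiagnosis. A vertex $c$ that becomes terminal in $X'$ because its outgoing edge $cv$ (with $v\in W$) was removed satisfies $cv\in X$, so $c\notin ter(X)$; it is also not a leaf of $T$ (it has incoming $X'$-edges), so $c\notin B$. Hence the hypothesis is \emph{not} silent at $c$: it gives $A(c,X,\bar r(X))\geq 0$ directly. Deleting $c$'s outgoing edge raises $A(c,\cdot)$ by $1$, no incoming edge at $c$ is deleted, and $A(c,r(X'))\leq A(c,\bar r(X))$ since $r(X')\se\bar r(X)$ and neither contains $c$'s outgoing edge; so $A(c,X',r(X'))\geq A(c,X,\bar r(X))+1\geq 1$. (The hypothesis excludes $ter(X)$, not $ter(X')$ --- that is the point you missed.) No transfinite iteration or further truncation is needed; the single pruning step already yields the blockage, with raylessness inherited from $X$ as in the application. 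As written, though, the proof is incomplete because this step is explicitly deferred to an unspecified ``more delicate argument.''
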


\begin{proof}
If $t\neq r$, then we say that the edge $st\in X$ is \emph{bad} if 
the unique edge starting at $t$ is in $\bar r(X)$.
We obtain $X'$ from $X$ by removing all bad edges.
Using the definition of $\bar r$, it is straightforward to check that 
$\bar r(X)=r(X')$.

As any $st\in X$ with  $A(t,X,\bar r(X))\leq -1$ is bad,
we have for all $uw\in X'$ that $A(w,X',r(X'))= A(w,X,\bar r(X))\geq 0$.
Moreover $A(r,X',r(X'))=A(r,X,\bar r(X))$ since $A(r,X,\bar r(X))\geq 0$.
Thus $X'$ is a blue blockage.
\end{proof}

\begin{lem}\label{calc1}
If $X\se Y$, then $\bar b(Y)\se \bar b(X)$ and $\bar r(Y)\se \bar r(X)$.
\end{lem}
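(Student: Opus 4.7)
The plan is to prove the containment $\bar b(Y)\se \bar b(X)$ by transfinite induction on the stage of the recursive construction of $\bar b(Y)$ from \autoref{def:overflow}; the analogous statement for $\bar r$ then follows by the symmetric argument with the roles of $B$ and $R$ swapped. Writing $Y_\alpha^Y$ for the set produced at stage $\alpha$ in the construction of $\bar b(Y)$, I aim to establish $Y_\alpha^Y \se \bar b(X)$ for every ordinal $\alpha$; this suffices since the construction eventually stabilises at $\bar b(Y)$.

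The base case $\alpha = 0$ is immediate: $Y_0^Y$ consists of the edges in $E\sm Y$ starting at blue vertices, and since $X \se Y$ each such edge lies in $E\sm X$, hence in $Y_0^X \se \bar b(X)$. Limit stages are handled by taking unions. The heart of the argument is the successor step, where $Y_{\beta+1}^Y = Y_\beta^Y + st$ for some $st \in E\sm (Y\cup Y_\beta^Y)$ with $A(s, Y_\beta^Y, Y) \geq 1$, and I need to show $st \in \bar b(X)$. Suppose for contradiction that $st \notin \bar b(X)$; since $X \se Y$ ensures $st \notin X$ as well, \autoref{smallest} applies and gives $A(s, \bar b(X), X) \leq 0$.

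The key observation is that every non-root vertex of $T$ has exactly one outgoing edge, and the outgoing edge $st$ of $s$ is, by the assumptions we have gathered, absent from each of $X$, $Y$, $Y_\beta^Y$, and $\bar b(X)$. Consequently both accumulation expressions at $s$ collapse to differences of incoming-edge counts, namely $A(s, Y_\beta^Y, Y) = |\{c : cs \in Y_\beta^Y\}| - |\{c : cs \in Y\}|$ and $A(s, \bar b(X), X) = |\{c : cs \in \bar b(X)\}| - |\{c : cs \in X\}|$. Chaining the induction hypothesis $Y_\beta^Y \se \bar b(X)$ and the inclusion $X \se Y$ with the two accumulation inequalities now produces the chain $|\{c : cs \in Y\}| + 1 \leq |\{c : cs \in Y_\beta^Y\}| \leq |\{c : cs \in \bar b(X)\}| \leq |\{c : cs \in X\}| \leq |\{c : cs \in Y\}|$, which is the desired contradiction.

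The only real subtlety I expect is the bookkeeping in the successor step: one has to notice that the specific edge $st$ we are trying to add is excluded from every set appearing in the two accumulation inequalities, so that the outgoing edge of $s$ contributes nothing to either side and everything reduces to counting incoming edges. Once this is in place, no compactness argument and no delicate transfinite bookkeeping beyond the straightforward monotonicity of the construction is needed.
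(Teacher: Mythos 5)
Your proof is correct. The base case, the limit step, and in particular the successor step all check out: since each vertex has a unique outgoing edge and that edge $st$ lies outside all four sets $X$, $Y$, $Y_\beta$ and $\bar b(X)$, every accumulation at $s$ reduces to a count of incoming edges, and the chain of inequalities you assemble from $A(s,Y_\beta,Y)\geq 1$, the induction hypothesis, $A(s,\bar b(X),X)\leq 0$ and $X\se Y$ does yield the desired contradiction.

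The route differs slightly from the paper's in packaging rather than in substance. The paper verifies that $\bar b(X)$ already contains all edges starting at blue vertices and satisfies the closure condition relative to $Y$ (namely $A(s,\bar b(X),Y)\leq 0$ for all $st\in E\sm(Y\cup \bar b(X))$, which follows from $X\se Y$ by the same unique-outgoing-edge observation you use), and then invokes the \emph{minimality} half of \autoref{smallest} applied to $\bar b(Y)$ to conclude in two lines. You instead unroll that minimality by running the transfinite induction of \autoref{def:overflow} explicitly, using only the stopping condition $A(s,\bar b(X),X)\leq 0$ rather than the full characterisation. Your version is more self-contained and makes the key bookkeeping visible; the paper's is shorter because the induction has already been absorbed into \autoref{smallest}. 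Both are sound.
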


\begin {proof}
By symmetry, it suffices to prove  $\bar b(Y)\se \bar b(X)$.
Note that $\bar b(X)$ contains all the edges starting at blue vertices.
As $X\se Y$, 
for all $st\in E\sm (Y\cup \bar b(X))$ we have 
$A(s,\bar b(X),Y)\leq 0$.
Thus $\bar b(Y)\se \bar b(X)$ by \autoref{smallest}.

\end {proof}

\begin{lem}\label{calc6}
 There are sets $X$ and $Y$ such that $\bar r(X)=Y$ and $\bar b(Y)=X$.
 \end{lem}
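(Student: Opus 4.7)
The plan is to apply the Knaster--Tarski fixed point theorem to the composition $F = \bar b \circ \bar r$ acting on the power set lattice $\mathcal P(E)$. Once we find $X$ with $F(X) = X$, setting $Y = \bar r(X)$ gives $\bar r(X) = Y$ and $\bar b(Y) = \bar b(\bar r(X)) = F(X) = X$, which is exactly what is required.

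The key observation is that by \autoref{calc1}, each of $\bar b$ and $\bar r$ is antitone with respect to $\subseteq$. Hence their composition $F$ is monotone: if $X_1 \subseteq X_2$, then $\bar r(X_2) \subseteq \bar r(X_1)$, so applying $\bar b$ reverses the inclusion again to give $\bar b(\bar r(X_1)) \subseteq \bar b(\bar r(X_2))$. This is the only input from the preceding lemmas that I will need.

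Since $(\mathcal P(E), \subseteq)$ is a complete lattice and $F$ is order-preserving on it, the Knaster--Tarski theorem guarantees that $F$ has a fixed point. (Concretely, one can take $X = \bigcup \{Z \subseteq E : Z \subseteq F(Z)\}$ as the greatest fixed point, or the analogous intersection for the least fixed point; either works.) With $X$ chosen so that $F(X) = X$ and $Y := \bar r(X)$, both required equalities hold.

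I do not expect any real obstacle: the lemma is essentially a one-line consequence of antitonicity together with Knaster--Tarski, and the only thing worth being careful about is to invoke \autoref{calc1} in the correct direction to conclude that $F$ is monotone rather than antitone.
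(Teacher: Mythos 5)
Your proposal is correct and matches the paper's argument: the paper also defines $f(X)=\bar b(\bar r(X))$, derives its monotonicity from \autoref{calc1} exactly as you do, and then obtains a fixed point --- the only difference being that the paper constructs the fixed point by hand via a transfinite iteration starting from $\emptyset$ rather than citing the Knaster--Tarski theorem. Since that iteration is just the standard proof of the fixed-point fact you invoke, the two arguments are essentially the same.
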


\begin{proof}
We define $f$ via $f(X)=\bar b (\bar r(X))$.
By \autoref{calc1}, if $X\se Y$, then $f(X)\se f(Y)$.
It suffices to construct some set $X$ such that $f(X)=X$.

For this, we consider the ordinal indexed sequence 
$f^{\alpha}$, where $f^0=\emptyset$, and if $\alpha=\beta+1$ is a successor, we just let $f^\alpha=f(f^\beta)$. 
If $\alpha$ is a limit, we let $f^\alpha=\bigcup_{\beta<\alpha} f^\beta$. 

Next we prove by induction over $\alpha$ that $f^\beta\se f^\alpha$
for all $\beta< \alpha$. 
If $\alpha$ is a limit, this is true by definition. 
So we may assume that there is some $\gamma$ with $\alpha=\gamma+1$.
By the induction hypothesis it suffices to show that $f^\gamma\se f^\alpha$.
By the induction hypothesis, $f^{\delta}\se f^\gamma$ for all $\delta<\gamma$.
So $f^{\delta+1}\se f^\alpha$.
If $\gamma$ is a successor, this immediately gives that $f^\gamma\se f^\alpha$. 
Otherwise $f^\gamma =\bigcup_{\delta<\gamma} f^{\delta+1}$.
So we also get that $f^\gamma\se f^\alpha$ in this case.
Thus $f^\beta\se f^\alpha$ for all $\beta< \alpha$.

Thus there has to be an ordinal $\gamma$ such that
$f^{\gamma}=f^{\gamma+1}$.
Hence we can pick $X=f^{\gamma}$, which completes the proof.
\end{proof}

A \emph{leafless forest} is an edge set $S$ such that the subforest $(V(S),S)$ of $T$ does not have a leaf.
Given an edge set $X$ and $c:V(T)\to \Nbb$,
then $S\se X$ is  \emph{illegal} for $c$ if $S$ is a leafless forest and 
$A(s,X\sm S)\leq c(s)$
for all $st\in S$.
A set $X$ is \emph{legal} for $c$ if no any nonempty 
subset of $X$ is illegal for $Y$.
For example, any rayless set is legal.

\begin{lem}\label{legal}
$\bar  b(Y)$ is legal for the function $v\mapsto A(v,Y)$. 
\end{lem}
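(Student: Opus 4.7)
The approach is to proceed by contradiction. Suppose that some nonempty $S \subseteq \bar b(Y)$ is illegal, i.e.\ $S$ is a leafless forest and $A(s, \bar b(Y) \sm S) \leq A(s, Y)$ for every edge $st \in S$. The plan is to exhibit an edge $st \in S$ for which this inequality fails.

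The first step is to observe $S \cap Y_0 = \emptyset$. Any edge of $Y_0$ starts at a vertex $s \in B$, and since $B$ consists of leaves of $T$, no edge of $T$ ends at $s$; in particular $s$ has no incoming edge in $S$. If $Y_0$ met $S$, such an $s$ would lie in $V(S)$ with no incoming edge of $S$, contradicting leaflessness. Consequently every edge of $S$ is added at some successor step of the construction in \autoref{def:overflow}, and I may pick $st \in S$ whose addition time $\alpha = \beta + 1$ is minimal.

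By minimality together with $S \cap Y_0 = \emptyset$ we get $Y_\beta \cap S = \emptyset$, hence $Y_\beta \subseteq \bar b(Y) \sm S$. To compare accumulations at $s$: since $T$ is directed toward the root, $s$ has at most one outgoing edge in $T$, namely $st$, which lies in neither $Y_\beta$ nor $\bar b(Y) \sm S$; so the outgoing contributions at $s$ both vanish, while the incoming edges of $Y_\beta$ at $s$ form a subset of those of $\bar b(Y) \sm S$ at $s$. Thus $A(s, Y_\beta) \leq A(s, \bar b(Y) \sm S)$. Combined with the selection rule $A(s, Y_\beta, Y) \geq 1$ that governed the addition of $st$, this yields $A(s, \bar b(Y) \sm S) \geq A(s, Y_\beta) > A(s, Y)$, the desired contradiction.

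The crux of the argument is the initial observation that $S \cap Y_0 = \emptyset$; this is what makes the ``pick the earliest-added edge'' strategy well-defined. Once that is in place, the rest is forced by the fact that each non-root vertex of $T$ has exactly one outgoing edge, which makes the subtracted terms in the accumulations at $s$ cancel cleanly.
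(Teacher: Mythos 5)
Your proof is correct and follows essentially the same route as the paper: pick the edge of $S$ added earliest in the construction of $\bar b(Y)$, note $Y_\beta\se \bar b(Y)\sm S$, and use the selection rule $A(s,Y_\beta,Y)\geq 1$ to contradict illegality. You additionally spell out two details the paper leaves implicit — that $S\cap Y_0=\emptyset$ (so the earliest-added edge is well-defined at a successor step) and that the outgoing contributions at $s$ cancel because $st$ lies in neither $Y_\beta$ nor $\bar b(Y)\sm S$ — which is a welcome clarification rather than a deviation.
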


\begin{proof}
Suppose for a contradiction that there is some nonempty $S\se \bar b(Y)$ that is illegal.
Amongst all $e\in S$, we pick $st$ such that it was first added to $\bar b(Y)$ in the recursive construction of \autoref{def:overflow}. Let $Y_{\alpha+1}$ be the first set in the construction containing $st$.
Then $A(s,\bar b(Y)\sm S,Y)\geq A(s,Y_\alpha,Y)\geq 1$. Thus $A(s,\bar b(Y)\sm S)\geq 1+c(s)$. This contradicts that $S$ is illegal, which completes the proof.
\end{proof}

\begin{lem}\label{cutoff_trees}
Let $X$ be legal for $c:V\to \Nbb$.
Then there is some rayless $X'\se X$ such that
\begin{enumerate}
 \item $A(v,X')\geq c(v)$ for all $v$ with $A(v,X)\geq c(v)$; 
\item $A(v,X')=A(v,X)$ for all $v\in ter(X)$.
\end{enumerate}
\end{lem}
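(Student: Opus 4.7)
The plan is to construct $X'$ by a transfinite recursion, starting from $X_0=X$ and removing one carefully chosen edge at each successor stage; at a limit $\alpha$ I take $X_\alpha=\bigcap_{\beta<\alpha}X_\beta$, and I stop as soon as $X_\alpha$ is rayless, setting $X'=X_\alpha$. The invariants to maintain are conditions (1) and (2), and that $X_\alpha$ is itself legal for $c$.

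At a successor step $\alpha+1$, I pick a ray $v_0,v_1,v_2,\ldots$ in $X_\alpha$ and apply the legality of $X_\alpha$ to its edge-set $S=\{v_iv_{i-1}:i\geq 1\}$, which is a nonempty leafless forest. This produces some $j\geq 1$ with $A(v_j,X_\alpha\sm S)>c(v_j)$; equivalently $v_j$ has at least $c(v_j)+1$ incoming edges in $X_\alpha$ off the ray. I then delete the edge $v_{j+1}v_j$. Since the outgoing edges $v_jv_{j-1}$ and $v_{j+1}v_j$ both lie in $X$, neither endpoint lies in $ter(X)$, so (2) is preserved. The deletion drops $A(v_j)$ by one and raises $A(v_{j+1})$ by one; the slack just supplied by legality absorbs the drop at $v_j$, so (1) persists. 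At a limit $\alpha$, condition (2) is preserved because every incoming edge to a terminal of $X$ lies in every $X_\beta$, and (1) is preserved because the nonnegative integer sequences of incoming and outgoing counts at each vertex are monotone decreasing and therefore stabilise, so $A(v,X_\alpha)$ agrees with $A(v,X_\beta)$ for some cofinal tail $\beta<\alpha$.

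The hard part is showing that legality of $X_\alpha$ itself survives both kinds of transfinite step. At a successor, any legality witness $s^*$ for a leafless $S'\se X_{\alpha+1}$ supplied by $X_\alpha$ is still a witness for $X_{\alpha+1}$ unless $s^*=v_j$ and its slack in $X_\alpha\sm S'$ is exactly one (the removed edge is incoming to $v_j$, outgoing at $v_{j+1}$, and not incident with any other vertex). This tight case must be excluded by refining the choice of the edge to remove---for instance by choosing $j$ minimally among the legality witnesses for $S$, or by enlarging the single-edge deletion to a bounded finite set that creates compensating slack. At a limit, for any leafless $S\se X_\alpha$ a pigeonhole argument on the $X_\beta$-witnesses in the finite or countable set $V(S)$ finds a common vertex $s\in V(S)$ with $A(s,X_\beta\sm S)>c(s)$ at cofinally many $\beta$, and this inequality passes to the limit by the same stabilisation used for (1). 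The recursion terminates at a countable ordinal because local finiteness of $T$ makes $X$ countable, so only countably many edges are ever removed.
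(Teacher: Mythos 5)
Your overall shape---a decreasing transfinite sequence starting from $X_0=X$, using legality to licence each deletion, stopping when the current set is rayless---matches the paper's, but there is a genuine gap exactly where you flag ``the hard part'': you never establish that legality survives a successor step, and the fixes you gesture at (choosing $j$ minimal among witnesses, or deleting some unspecified ``bounded finite set'') are neither carried out nor obviously workable. Deleting the single edge $v_{j+1}v_j$ can destroy the only legality witness of some \emph{other} leafless set $S'\se X_{\alpha+1}$ whose slack at $v_j$ in $X_\alpha\sm S'$ was exactly one, and minimality of $j$ along the one ray you happened to pick says nothing about such $S'$. The limit step is also not sound as stated: since your recursion terminates at a countable ordinal, the relevant limits have cofinality $\omega$, and when $V(S)$ is infinite a pigeonhole over countably many stages need not produce a single vertex that witnesses legality cofinally often (the witness could be a different vertex at every stage). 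So the invariant you are trying to carry---bare legality of $X_\alpha$---is precisely the one that is hard to propagate.

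The paper sidesteps both problems by carrying a \emph{stronger} structural invariant in place of legality: (3) whenever $X_\alpha$ contains a leafless set $S$, it contains \emph{all} edges of $X$ ending in $V(S)$. This reduces legality of $X_\alpha$ to legality of $X$ outright (for $st\in S$ one then has $A(s,X_\alpha\sm S)=A(s,X\sm S)$, so an illegal leafless subset of $X_\alpha$ would already be illegal in $X$), and nothing about legality needs to be re-proved at limits. To preserve (3) at a successor step one does not remove one edge of one ray: one takes $U$ to be the union of \emph{all} leafless subforests of $X_\beta$ (itself a leafless forest), applies legality of $X_\beta$ to $U$ to obtain $st\in U$ with $A(s,X_\beta\sm U)\geq c(s)+1$, and removes every edge of $U$ ending at $s$. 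This expels $s$ from the vertex set of every leafless subforest of $X_{\alpha}$, which is what makes (3) persist, while the slack of $+1$ absorbs the fact that $s$ retains its outgoing edge $st$, so (1) still holds at $s$; (2) holds since $s\notin ter(X)$. If you want to repair your argument, replace ``legal'' by this invariant and perform the bulk deletion at the witness vertex.
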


\begin{proof}
We shall construct $X'$ as a limit of a nested decreasing ordinal indexed sequence $(X_{\alpha})$ with the following properties.
\begin{enumerate}
 \item $A(v,X_\alpha)\geq c(v)$ for all $v$ with $A(v,X)\geq c(v)$;
\item $A(v,X_\alpha)=A(v,X)$ for all $v\in ter(X)$;
\item If $X_\alpha$ contains a leafless set $S$, then it contains all edges of $X$ terminating in $V(S)$.

(3) implies that $X_\alpha$ is legal for $c$: 
Let $S$ be any illegal leafless subforest of $X_\alpha$.
Then by (3), any edge of $X$ ending at a vertex in $V(S)$ is already in $X_\alpha$.
Thus $S$ is also an illegal leafless subforest of $X$, and so must be empty.

If $\alpha$ is a limit, we let $X_\alpha=\bigcap_{\beta<\alpha} X_\beta$.
By the induction hypothesis, $X_\alpha$ satisfies (1)-(3). 
It remains to consider the case that $\alpha=\beta+1$ is a successor.
Let $U$ be the union of all leafless subforests of $X_\beta$.
By construction, $U$ itself is a leafless forest. 
If $U$ is empty, we stop the construction and let $X'=X_\beta$.
Otherwise as $X_\beta$ is legal, there is some $st\in U$ such that $A(s,X_\beta\sm U)\geq c(s)+1$.
We obtain $X_\alpha$ from $X_\beta$ by removing all edges 
in $U$ that end at $s$. 
Then (1) is true by construction and (2) is true as $s\notin ter(X)$.
To see (3), let $W\se X_\alpha$ be leafless. Then $W\se U$ and $s\notin V(W)$ by the choice of $X_\alpha$. Thus (3) follows from the induction hypothesis.

This construction terminates as the sets $X_\alpha$ are nested and strictly decrease in every successor step. By construction $X'$ is rayless and satisfies (1) and (2).
\end{enumerate}   
\end{proof}

\begin{proof}[Proof of \autoref{blockage}.]
By \autoref{calc6}, there are sets $X$ and $Y$ such that $\bar r(X)=Y$ and $\bar b(Y)=X$.
Either $A(r,X,Y)\geq 1$ or $A(r,Y,X)\geq 0$. We only consider the case  $A(r,X,Y)\geq 1$, the other case will be analogous.

Our aim is construct a strong blue blockage.
By \autoref{flow}, $A(v, X,Y)\geq 0$ for every $v\in V(X)\sm (B\cup ter(X))$.
By \autoref{legal}, $X$ is legal for $v\mapsto A(v,Y)$. 
So by \autoref{cutoff_trees}, there is some rayless
$X'\se X$ satisfying (1) and (2) from that lemma.
In particular, $A(r,X',Y)\geq 1$ and $A(v,X',Y)\geq 0$ for all $v\in V(X)\sm (B\cup ter(X))$.

By \autoref{rem1} it suffices to show that $\bar r(X')= Y$.
As $X'\se X$ by \autoref{calc1}, $ Y\se \bar r(X')$.
To see that $\bar r(X')\se Y$, we want to apply \autoref{smallest}.
Thus it suffices to show for any 
$st\in E\sm (X'\cup Y)$ that $A(s,X',Y)\geq 0$.
If $s\in V(X)\sm (B\cup ter(X))$, then it has been shown above.
For $s\in B$ this is clear. 
Otherwise $st\notin X\cup Y$ and it follows from $Y=\bar b(X)$ and (2) of \autoref{cutoff_trees}.
Hence $\bar r(X')= Y$.
This completes the proof.
\end{proof}

\section{The special case of the Matroid Intersection Conjecture}\label{sec:int}

First, we introduce the class of matroids for which we prove the Matroid Intersection Conjecture.

\begin{dfn}
A {\em tree $\Tcal$ of matroids} consists of a tree $T$, together with a function $M$ assigning to each node $t$ of $T$ a matroid $M(t)$ on ground set $E(t)$, such that for any two nodes $t$ and $t'$ of $T$, if $E(t) \cap E(t')$ is nonempty then $tt'$ is an edge of $T$.

For any edge $tt'$ of $T$ we set $E(tt') = E(t) \cap E(t')$. We also define the {\em ground set} of $\Tcal$ to be $E = E(\Tcal) = \left(\bigcup_{t \in V(T)} E(t)\right) \setminus \left(\bigcup_{tt' \in E(T)} E(tt')\right)$. 

We shall refer to the edges which appear in some $E(t)$ but not in $E$ as {\em dummy edges} of $M(t)$: thus the set of such dummy edges is $\bigcup_{tt' \in E(T)} E(tt')$.
\end{dfn}

The idea is that the dummy edges are to be used only to give information about how the matroids are to be pasted together, but they will not be present in the final pasted matroid, which will have ground set $E(\Tcal)$. 
We will now consider a type of pasting corresponding to 2-sums. We will make use of some additional information to control the behaviour at infinity: a set $\Psi$ of ends of $T$. 

\begin{dfn}
A tree $\Tcal = (T, M)$ of matroids is {\em of overlap 1} if, for every edge $tt'$ of $T$, $|E(tt')| = 1$. In this case, we denote the unique element of $E(tt')$ by $e(tt')$.

Given a tree of matroids of overlap 1 as above and a set $\Psi$ of ends of $T$, a {\em $\Psi$-pre-circuit} of $\Tcal$ consists of a connected subtree $C$ of $T$ together with a function $o$ assigning to each vertex $t$ of $C$ a circuit of $M(t)$, such that all ends of $C$ are in $\Psi$ and for any vertex $t$ of $C$ and any vertex $t'$ adjacent to $t$ in $T$, $e(tt') \in o(t)$ if and only if $t' \in C$. The set of $\Psi$-pre-circuits is denoted $\overline\Ccal(\Tcal, \Psi)$. 

Any $\Psi$-pre-circuit $(C, o)$ has an {\em underlying set} $\underline{(C, o)} = E \cap \bigcup_{t \in V(C)} o(t)$. Nonempty subsets of $E$ arising in this way are called {\em $\Psi$-circuits} of $\Tcal$. The set of $\Psi$-circuits of $\Tcal$ is denoted $\Ccal(\Tcal, \Psi)$.
\end{dfn}

We shall rely on the following theorem.

\begin{thm}[\cite{BC:determinacy}]
 Let $\Tcal = (T, M)$ be a tree of matroids of overlap 1 and $\Psi$ a Borel set of ends of $T$, then there is a matroid $M_\Psi(T,M)$ whose circuits are the $\Psi$-circuits.
\end{thm}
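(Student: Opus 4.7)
The plan is to verify the standard circuit axioms for infinite matroids: (C1) no circuit is empty, (C2) circuits form an antichain under inclusion, (C3) the infinite circuit-elimination axiom, and (IM) every independent subset of a set $S$ extends to a maximal independent subset of $S$. The first two should be essentially book-keeping; (C3) is combinatorial but local; (IM) is the serious axiom.

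(C1) is immediate from the definition of $\Psi$-circuits as nonempty underlying sets. For (C2), I would take two pre-circuits $(C_1, o_1)$ and $(C_2, o_2)$ whose underlying sets are comparable and argue along the intersection of their subtrees: the defining property ``$e(tt') \in o(t)$ iff $t' \in C$'' forces $C_1$ and $C_2$ to agree and $o_1(t)$ and $o_2(t)$ to be comparable as circuits of $M(t)$ at each common node, which contradicts (C2) inside the finite matroid $M(t)$. For (C3), given pre-circuit representations of the distinguished circuit $C$ and of each $C_x$, I would glue them along shared nodes of $T$ and then at each node $t$ apply the circuit-elimination axiom of $M(t)$ to combine the local circuits present there, propagating outwards from the node carrying $z$. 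The resulting object is still a pre-circuit because each component's ends lie in $\Psi$, and its underlying set contains $z$ and avoids the prescribed set $X$.

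The main obstacle is (IM), because a naive Zorn-style argument can produce a chain whose union contains a $\Psi$-circuit whose underlying subtree has all its ends in $\Psi$ and no finite witness. This is precisely where the Borelness of $\Psi$ should do work. My plan is to set up an infinite two-player game on $T$ in which one player augments a candidate independent set $J$ node by node while the other tries to certify a failure via a $\Psi$-pre-circuit, and then invoke Borel determinacy: a winning strategy for the first player would yield the desired maximal independent set, while a winning strategy for the second would yield a $\Psi$-circuit witnessing that no extension exists. I expect \autoref{blockage} to play the role of the combinatorial shadow of exactly this dichotomy, the rayless ``first cause'' witnesses it provides being what is needed to cut off the infinite regress that obstructs the direct Zorn argument.
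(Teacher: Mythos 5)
This statement is not proved in the paper at all: it is imported as a black box from \cite{BC:determinacy}, and its only role here is to guarantee that the objects $M_{\Psi_M}(T,M)$ and $M_{\Psi_N}(T,N)$ in \autoref{case_int} are actually matroids. So there is no in-paper proof to compare against; what can be said is how your plan relates to the proof in the cited source. At the level of strategy you have located the right pressure point: (C1)--(C3) are the routine part, (IM) is where all the difficulty lives, and the reason the hypothesis on $\Psi$ is ``Borel'' rather than ``arbitrary'' is precisely that the argument runs through determinacy of an infinite game played along $T$, with a winning strategy for one player yielding the required maximal independent extension and a winning strategy for the other yielding an obstructing $\Psi$-circuit. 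That is genuinely the shape of the argument in \cite{BC:determinacy}, and identifying it blind is the hard part of this exercise.

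Two corrections, though. First, you have the logical dependency on \autoref{blockage} backwards: the blockage theorem is a new result of the present paper and plays no role in establishing that $M_\Psi(T,M)$ is a matroid; it enters only later, in the proof of \autoref{case_int}, to produce the packing/covering dichotomy. Presenting it as ``the combinatorial shadow'' of the determinacy argument conflates two independent ingredients. Second, your treatments of (C2) and (C3) are thinner than you suggest. For (C2), comparability of the underlying sets does not immediately force the subtrees $C_1$ and $C_2$ to coincide, since the underlying set discards all dummy edges; one has to argue via the propagation condition ``$e(tt')\in o(t)$ iff $t'\in C$'' starting from a non-dummy element, and the node-wise circuits $o_1(t)$, $o_2(t)$ need not be comparable, so the contradiction is not simply (C2) inside $M(t)$. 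For (C3), the infinite elimination axiom quantifies over an arbitrary family $(C_x)_{x\in X}$ and requires a single circuit through $z$ avoiding all of $X$; performing node-wise eliminations ``propagating outwards'' must be done so that the resulting subtree is connected, satisfies the propagation condition at every node simultaneously, and has all its ends in $\Psi$ --- none of which follows from local elimination alone. These are fixable, but as written they are gaps, not book-keeping.
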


We can provide the following towards Matroid Intersection.

\begin{thm}\label{case_int}
Let $(T,M)$ and $(T,N)$ be trees of matroids of overlap 1.
Further assume for each node $t$ of $T$ 
that $E(M(t))=E(N(t))$ and 
that both $M(t)$ and $N(t)$ are uniform matroids
whose rank and corank is at least  the degree of $t$.

Then any two matroids
$M_{\Psi_M}(T,M)$ and $M_{\Psi_N}(T,N)$ satisfy the Matroid Intersection Conjecture.
\end{thm}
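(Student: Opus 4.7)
The plan is to derive \autoref{case_int} from \autoref{blockage} by encoding the Matroid Intersection question as a blockage problem on an auxiliary rooted tree. Fix a root of $T$, so that every edge of $T$ is oriented towards it. Because each local matroid $M(t)$ is a finite uniform matroid whose rank and corank are at least $\deg_T(t)$, the tree $T$ is locally finite, which is the hypothesis needed by \autoref{blockage}.

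Next, I would construct a locally finite rooted auxiliary tree $T^{\ast}$ by attaching ``decision leaves'' at each node $t$ of $T$: for every real element $e \in E \cap E(t)$, attach to $t$ a blue leaf representing the option ``$e\in J_M$'' and a red leaf representing ``$e\in J_N$''. Any blockage $X$ in $T^{\ast}$ then induces, at each node $t$, a partition $E \cap E(t) = J_M(t) \sqcup J_N(t) \sqcup K(t)$ according to which coloured leaf-edges lie in $X$. Because $M(t)$ and $N(t)$ are uniform, the accumulation condition $A(v, X) \ge 0$ at each internal node translates into counting inequalities of the form $|J_M(t)| + \#\{\text{dummy edges at } t \text{ allocated to } M\} \le r_M(t)$, which are precisely the local matroid independence conditions; the symmetric statement holds for $N$. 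The rayless condition on the blockage is exactly what forbids $\Psi_M$- and $\Psi_N$-pre-circuits from being contained in $J_M$ or $J_N$: such a pre-circuit would yield a ray in the blockage whose limit is an end of $T$ in $\Psi_M$ or $\Psi_N$, which by construction of the $\Psi$-circuits would otherwise be certified as an actual circuit of $M_{\Psi_M}(T,M)$ or $M_{\Psi_N}(T,N)$.

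Applying \autoref{blockage} to $T^{\ast}$ yields either a strong blue blockage or a red blockage. By the symmetric design of $T^{\ast}$, both outcomes produce a common independent set $I = J_M \sqcup J_N$ with $\Cl_{M_{\Psi_M}(T,M)}(J_M) \cup \Cl_{M_{\Psi_N}(T,N)}(J_N) = E$: independence follows from the accumulation inequalities combined with raylessness (via the characterisation of $\Psi$-circuits), and the spanning condition at elements in $K(t)$ follows from the fact that at each such $t$ the local counts saturate the rank of $M(t)$ or of $N(t)$, so that $e \in K(t)$ together with $J_M(t)$ (or $J_N(t)$) forms a local circuit that extends, via the tree of matroids construction, to a global circuit through $e$. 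The main obstacle is designing $T^{\ast}$ and its leaf colouring so that (i) the dichotomy in \autoref{blockage} collapses to the single conclusion of the Matroid Intersection Conjecture, and (ii) each dummy edge $e(tt')$ is allocated between $M$ and $N$ consistently as viewed from both $t$ and $t'$. The uniformity of the local matroids, together with the rank/corank-versus-degree hypothesis, is exactly what makes the encoding possible, as it reduces all local matroid conditions to the linear counting that can be tracked by the accumulation function.
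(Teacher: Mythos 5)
Your overall instinct---apply \autoref{blockage} to an auxiliary tree obtained from $T$ by attaching coloured leaves that encode rank data---is the right one, but the two ``main obstacles'' you defer at the end are not technicalities; they are the entire content of the proof, and your proposed encoding does not resolve them. The central problem is your hope that ``the dichotomy in \autoref{blockage} collapses to the single conclusion.'' It cannot: a strong blue blockage and a red blockage are genuinely different certificates (and, as \autoref{fig:blockage} shows, both can exist at once), and a single application of \autoref{blockage} never yields the full Matroid Intersection conclusion. The paper instead routes everything through the Packing/Covering reformulation (\autoref{PC_thm}): one first reduces (\autoref{rem1new}, via a maximal federated packing and a direct-sum decomposition of the quotient) to showing that each element $e$ lies in \emph{either} a federated packing \emph{or} a federated covering, and only then does the dichotomy of \autoref{blockage} match up---a red blockage produces the packing, a strong blue blockage the covering, with the root of the auxiliary tree placed at the node containing $e$. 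Without some reduction of this kind your argument has nowhere to put the two distinct outcomes.

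Your encoding of the auxiliary tree is also off. You attach one blue and one red leaf per element of $E\cap E(t)$ and try to read off a three-way partition $J_M(t)\sqcup J_N(t)\sqcup K(t)$ from membership of leaf-edges in the blockage; but a blockage is a single edge set constrained by accumulation inequalities against a flow, and it does not carry enough information to encode two disjoint spanning sets. The paper attaches $|K(t)|$ leaves of a \emph{single} colour determined by the sign of the deficiency $K(t)=|E(t)|-r(M(t))-r(N^*(t))$; the blockage then only selects \emph{which nodes} belong to the packing, and the local spanning sets $S^M_t$, $S^N_t$ are built afterwards using uniformity. Finally, your claim that raylessness ``forbids $\Psi$-pre-circuits from being contained in $J_M$'' misidentifies its role: circuits of $M_{\Psi_M}(T,M)$ can be supported on finitely many nodes and have nothing to do with rays in the blockage. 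Raylessness is used for the \emph{spanning} side, via the green/yellow colouring and the ``good node'' induction, to guarantee that every element is spanned by finitely many local spanning sets and that dummy edges are allocated consistently across each tree edge---which is exactly your unresolved obstacle (ii).
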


Let $E(t)$ denote the common ground set of $M(t)$ and $N(t)$ and $E$ the common ground set of
$M_{\Psi_M}(T,M)$ and $M_{\Psi_N}(T,N)$.
A set is \emph{federated} if it
is the union of sets $E(t)\cap E$.
A \emph{packing} for two matroids $K$ and $L$ is a set $P$ together with disjoint spanning sets $S^M$ and $S^N$
of $K\restric_P$ and $L\restric_P$, respectively. 
In a slight abuse of notation, we will refer to the set $P$ itself as a packing.
A \emph{covering} for $K$ and $L$ is a packing for $K^*$ and $L^*$.
We need the following.

\begin{thm}[{\cite[Corollary 3.7]{BC:PC}}]
\label{PC_thm}
$M$ and $N$ satisfy Matroid Intersection if and only if 
$E$ can be partitioned into a packing $P$ for $M$ and $N^*$ and a 
covering $Q$ for $M$ and $N^*$. 
\end{thm}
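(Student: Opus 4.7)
\medskip

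The plan is to use \autoref{PC_thm} to convert the Matroid Intersection Conjecture for $M_{\Psi_M}(T,M)$ and $M_{\Psi_N}(T,N)$ into a decomposition problem, and then obtain this decomposition by applying \autoref{blockage} to a suitable orientation of $T$. Concretely, by \autoref{PC_thm} it suffices to partition $E$ into a packing $P$ for $M = M_{\Psi_M}(T,M)$ and $N^* = (M_{\Psi_N}(T,N))^*$ together with a covering $Q$ for the same pair. Since each $M(t)$ and $N(t)$ is uniform, whether a set is spanning or cospanning in $M(t)$ (resp.\ $N(t)$) depends only on its cardinality, and the hypothesis that the rank and corank of $M(t), N(t)$ are at least $\deg(t)$ means every combinatorially admissible distribution of roles to the dummy edges at $t$ can be extended to the real elements of $E(t)\cap E$.

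I would pick an arbitrary root $r$ in $T$ and direct all edges towards it. Each dummy edge $e(tt')$ then carries a local ``role'', and the point of the construction is that these roles must be chosen consistently across $T$ so that: the $P$-side gives disjoint spanning sets for $M\restric_P$ and $N^*\restric_P$, while the $Q$-side gives disjoint spanning sets for $M^*\restric_Q$ and $N\restric_Q$. Using the local characterisation of spanning sets in a tree-sum of matroids (spanning corresponds to a connected-subtree style condition on dummy edges, just as circuits do in the definition of $\Ccal(\Tcal,\Psi)$), one shows that the global packing/covering conditions reduce to an integer balance inequality at every $t\in V(T)$ expressed in terms of how many of the edges incident with $t$ in $T$ are directed into, out of, or fixed in two auxiliary edge sets. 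This is exactly the form of the accumulation conditions $A(v,X,b(X))\geq 0$ that define blockages in \autoref{blockage}.

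In this reformulation, one subdivides each edge of $T$ to separate out the two colourings, taking the leaves sitting above (in the rooted orientation) a dummy edge forced into an ``$M$-spanning'' role to be blue, and those sitting above a dummy edge forced into an ``$N^*$-spanning'' role to be red; ends of $T$ landing in $\Psi_M$ are handled by the blue side and ends in $\Psi_N$ by the red side, which is where the Borel/end-set data feeds into the tree structure. Applying \autoref{blockage} yields either a strong blue blockage or a red blockage. A strong blue blockage, read as a certificate over $T$, produces the desired packing $P$ (and, taking complements at each node using the uniformity and rank/corank hypothesis, the covering $Q$ on $E\setminus P$ simultaneously); a red blockage produces $Q$ first and then $P$ by the symmetric argument. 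The crucial feature is that the blockage is \emph{rayless}, which guarantees that every element of $E$ eventually gets assigned — the ``final decision'' of the bureaucracy — as opposed to an infinite deferral along an end.

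The main obstacle I expect is precisely this translation step: showing that the accumulation inequalities at every non-leaf $t$ encode exactly the uniform-matroid balance condition ``number of $M$-spanning roles at $t$ plus number of $N^*$-spanning roles at $t$ is at most $|E(t)|$, with enough margin for the dummy-edge handoffs'', and doing so in a way that preserves the distinction between ``blockage'' and ``strong blockage''. The rank/corank $\geq \deg(t)$ hypothesis is tailor-made for this: it provides exactly $\deg(t)$ units of slack on each side, matching the number of dummy edges at $t$ and hence the number of incident edges in the tree used by \autoref{blockage}. Once the translation is set up correctly, verifying raylessness of the blockage corresponds to verifying that the resulting $P$ and $Q$ really give rise to $\Psi_M$-spanning (respectively $\Psi_N$-cospanning) sets in the sense of the definition of $\Ccal(\Tcal,\Psi)$, which completes the proof via \autoref{PC_thm}.
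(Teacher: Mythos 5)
Your proposal does not prove the statement at hand. \autoref{PC_thm} is a general equivalence for an \emph{arbitrary} pair of matroids $M$ and $N$: Matroid Intersection holds for $(M,N)$ if and only if $E$ partitions into a packing and a covering for $(M,N^*)$. In the paper this is an imported result (Corollary 3.7 of the cited Bowler--Carmesin packing/covering paper), and nothing about trees of matroids, uniformity, or \autoref{blockage} is relevant to it. What you have written instead is a sketch of how to \emph{use} \autoref{PC_thm} --- together with \autoref{blockage} --- to derive \autoref{case_int} for the specific matroids $M_{\Psi_M}(T,M)$ and $M_{\Psi_N}(T,N)$. Your very first sentence invokes \autoref{PC_thm} as a known tool, so as a proof of \autoref{PC_thm} itself the argument is circular: you assume the equivalence you were asked to establish and then prove a different theorem. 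A genuine proof would have to argue both directions of the equivalence for general infinite matroids, e.g.\ showing how a common independent set $I=J_M\cup J_N$ with $\Cl_M(J_M)\cup\Cl_N(J_N)=E$ yields the packing/covering partition of $E$ and conversely; no step of that kind appears anywhere in your write-up.

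Separately, even read as a sketch of the paper's derivation of \autoref{case_int}, your translation diverges from the actual construction in ways that matter. The paper does not subdivide edges of $T$ or route the end sets $\Psi_M,\Psi_N$ into the blue/red colouring; instead it attaches $|K(t)|$ new leaves to each node $t$, where $K(t)=|E(t)|-r(M(t))-r(N^*(t))$, colours these leaves blue or red according to the sign of $K(t)$, roots the tree at the node containing the given element $e$, and applies \autoref{blockage} to this augmented tree. The blockage is then converted into a federated packing or covering via an explicit green/yellow colouring of nodes, with raylessness used to show every element is spanned after finitely many steps, and the reduction to \autoref{PC_thm} goes through \autoref{rem1new} (maximal federated packings and a direct-sum decomposition after contracting $P$), not directly. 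Your intuition about slack of size $\deg(t)$ and the role of raylessness is in the right spirit, but it does not substitute for a proof of the equivalence stated in \autoref{PC_thm}.
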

The following is a consequence of results of \cite{BC:PC}. 

\begin{rem}\label{rem1new}
In order to prove \autoref{case_int}, it suffices 
for each pair $(M_{\Psi_M}(T,M),M_{\Psi_N}(T,N)^*)$ and each $e\in E$
to construct for that pair either a federated packing 
containing $e$ or a 
federated covering containing $e$.
\end{rem}

Before proving \autoref{rem1new}, we prove \autoref{case_int} assuming \autoref{rem1new}.

\begin{proof}[Proof that \autoref{blockage} and \autoref{rem1new} imply \autoref{case_int}.]
For each $t\in V(T)$, we pick an integer $K(t)$. Our way of making these choices will be revealed later.
We obtain $T'$ from $T$ by sticking $|K(t)|$ leaves onto each of its nodes $t$.
Let $B$ consist of those leaves added to nodes $t$ with $K(t)< 0$, and let
$R$ consist of those leaves added to nodes $t$ with $K(t)>0$.
The root $r$ of $T'$ is the unique node with $e\in E(r)$.
Now we apply \autoref{blockage} to $T'$, $B$ and $R$.

First we consider the case that the outcome of this theorem is a red blockage $X_R$.
Let $Z$ be the set of those nodes of $T$ that in $T'$ do not have some $b\in B$ above them.
Let $U$ be the induced subgraph of $T$ whose vertex set is the downclosure of $V(X_R)\cup Z$. Note that $U$ is a tree containing $r$.
Our aim is to show that the union $P$ of the sets $E(t)\cap E$ where $t\in V(U)$ can be given the structure of a packing.
Note that $P$ is federated and contains $e$.

Let $F$ be the set of those dummy edges $e(tt')$ with $tt'\in E(T)\sm E(U)$. 
For $t\in V(U)$, let $R_t$ be the set of those edges in $E(U)\cap X_R$ that end at $t$.
As $X_R$ is a red blockage, $K(t)+|R_t|\geq |F|$.
As the rank of $M(t)$ and the corank of $N^*(t)$ are large enough, $P_t=E(t)\sm R_t\sm F$ is a packing for $(M(t)/R_t\sm F, N(t)^*/R_t\sm F)$ precisely when
$|E(t)|-r(M(t))-r(N^*(t))+|R_t|\geq |F|$.
We now reveal that we have picked $K(t)=|E(t)|-r(M(t))-r(N^*(t))$.
Thus there are  
disjoint spanning sets $S^M_t$ and $S^N_t$ witnessing that $P_t=E(t)\sm R_t\sm F$ is a packing  for the pair above. Moreover, if $uv\in R_v$, we can ensure that both 
$S^M_u$ and $S^N_u$ do not contain $e(uv)$.

We give each node of $U$ one of the colours green and yellow. Which value a particular node gets is revealed later.
If $u$ is green, we ensure that $S^M_u$ contains all dummy edges of $P_u$, which is possible as $r(M(t))$ is at least the degree of $t$.
Similarly, if $u$ is yellow, we ensure that $S^N_u$ contains all dummy edges of $P_u$, which is possible as $r(N^*(t))$ is at least the degree of $t$.
We let $S^M$ and $S^N$ be the set of non-dummy edges of 
$\bigcup_{t\in V(U)} S_t^M$ and $\bigcup_{t\in V(U)} S^N_t$, respectively.

Next we show that $S^M$ spans $P$ in $M_{\Psi_M}(T,M)$.
So let $f\in P\sm S^M$ and let $t_f$ be the unique node with $f\in E(t_f)$.
If $t_f$ is yellow, then $S^M_{t_f}\cap E$ spans $f$.
Moreover, if all neighbours $x$ of $t_f$ are yellow, then $f$ is spanned
by $S^M_{t_f}\cap E$ together with the $S^M_{x}\cap E$ for the neighbours $x$.
This motivates the following definition. We recursively define when a node of $U$ is \emph{good}. All yellow nodes are good. A vertex $v$ is good if all its neighbours are good.
If the unique edge pointing away from $v$ is in $X_R$, then $v$ is already good if all upwards neighbours are good.

As $T$ is locally finite and every decreasing sequence of ordinals is finite, if $t_f$ is good, then $f$ is spanned
by the union of finitely many $S^M_t\cap E$.
Thus, in order to show that $S^M$ spans $P$, it suffices to show that every node of $U$ is good. Now we reveal the colours of the nodes, which are defined recursively.
The root $r$ is green.
The colour of $s\in V(U)-r$ is determined from the colour of the unique node $t$
with $st\in E(T)$: If $st\in X_R$, then $s$ and $t$ get the same colour.
Otherwise $s$ and $t$ get different colours.

Now suppose for a contradiction that there is a node $v$ that is not good.
We pick such a $v$ minimal in $V(U)$. Then $v$ is green
 and by minimality must have an upward neighbour $v'$ that is not good. In particular, $v'$ is green and so $vv'\in X_R$.
For any $st\in X_R$ with $s$ not good, there is a non-good upward neighbour $u$ of $s$. 
In particular, $us\in X_R$. Iterating this argument, we obtain a ray starting in $v$ included in $X_R$. This contradicts that $X_R$ is rayless.
Thus every node of $U$ is good and so $S^M$ spans $P$.
 The proof that 
$S^N$ spans $P$ in $M_{\Psi_N}(T,N)$ is similar.

The case that we get a strong blue blockage is similar since $K(t)=-|E(t)|+r(M^*(t))+r(N(t))$. This completes the proof.
\end{proof}

\begin{proof}[Sketch of the proof of \autoref{rem1new}.]
With the proof of {\cite[Lemma
4.3]{BC:PC}}, it is not hard to show that there is an (inclusion-wise) maximal federated packing $P$ for  $M_{\Psi_M}(T,M)$ and $M_{\Psi_N}(T,N)^*$. 
Let $S$ be the set of those nodes $t$ of $T$ with $E(t)\cap E\se P$.
Let $T'$ be a component of $T\sm S$.
For $t\in V(T')$, we obtain $M'(t)$ from $M(t)$ by contracting all edges of the form $e(st)$ or $e(ts)$ with $s\in S$. We obtain $N'(t)$ from $N(t)$ by deleting all edges of the form $e(st)$ or $e(ts)$ with $s\in S$. 
Let $\Psi_M'$ be the intersection of $\Psi_M$ with the set $\Omega(T')$ of ends of $T'$.
Similarly, $\Psi_N'=\Psi_N\cap \Omega(T')$.
Note that $M(t)$ has at least $r(M(t))$ real edges.
Using this and the corresponding statements for $M^*(t)$, $N(t)$ and $N^*(t)$ and the explicit description of contractions in trees of matroids {\cite[Definition 5.3]{BC:determinacy}}, it is not hard to show that $M_{\Psi_M}(T,M)/P$
is a direct sum of the $M_{\Psi_M'}(T',M')$.
Similarly, $M_{\Psi_N}(T,N)\sm P$
is a direct sum of the $M_{\Psi_N'}(T',N')$.

In particular, $M_{\Psi_M'}(T',M')$ and $M_{\Psi_N'}(T',N')$ are matroids and so satisfy the  assumptions of \autoref{case_int}.
So we may assume that for any $e$ in the common ground set of $M_{\Psi_M}(T,M)/P$
and $M_{\Psi_N}(T,N)^*/ P$ 
there is either a federated packing $P'$ containing $e$ or a 
federated covering $Q'$ containing $e$. By the maximality of $P$,
it can be shown with an argument as in {\cite[Corollary 4.9]{BC:PC}} that there cannot be such a $P'$.
By an argument dual to the one above, we construct a maximal federated covering $Q$ 
for $M_{\Psi_M}(T,M)/P$
and $M_{\Psi_N}(T,N)^*/P$, and we conclude 
that $Q=E\sm P$. Note that $Q$ is a covering for $(M_{\Psi_M}(T,M),M_{\Psi_N}(T,N)^*)$.
Thus \autoref{PC_thm} implies that $M_{\Psi_M}(T,M)$ and $M_{\Psi_N}(T,N)$ satisfy the Matroid Intersection Conjecture.
\end{proof}

\section{Acknowledgement}

I am grateful to Nathan Bowler for useful discussions on this topic.

\bibliographystyle{plain}
\bibliography{literatur}

\begin{thebibliography}{1}

\bibitem{AharoniBerger}
R.~Aharoni and E.~Berger.
\newblock Menger's theorem for infinite graphs.
\newblock {\em Invent.\ math.}, 176:1--62, 2009.

\bibitem{union2}
E.~Aigner-Horev, J.~Carmesin, and J.~Fr{\"o}hlich.
\newblock On the intersection of infinite matroids.
\newblock Preprint (2011), current version available at
  {http://arxiv.org/abs/1111.0606}.

\bibitem{BC:determinacy}
N.~Bowler and J.~Carmesin.
\newblock Infinite matroids and determinacy of games.
\newblock Preprint 2013, current version available at
  {http://arxiv.org/abs/1301.5980}.

\bibitem{BC:PC}
N.~Bowler and J.~Carmesin.
\newblock Matroid intersection, base packing and base covering for infinite
  matroids.
\newblock To appear in Combinatoica, available at
  {http://arxiv.org/pdf/1202.3409} ‎.

\bibitem{matroid_axioms}
H.~Bruhn, R.~Diestel, M.~Kriesell, R.~Pendavingh, and P.~Wollan.
\newblock Axioms for infinite matroids.
\newblock Advances in Mathematics, to appear; arXiv:1003.3919 [math.CO].

\bibitem{DiestelBook10}
R.~Diestel.
\newblock {\em Graph {T}heory \emph{(4th edition)}}.
\newblock Springer-Verlag, 2010.
\newblock \\ Electronic edition available at:\\ {\small\tt
  http://diestel-graph-theory.com/index.html}.

\bibitem{Oxley}
J.~Oxley.
\newblock {\em {M}atroid {T}heory}.
\newblock Oxford University Press, 1992.

\end{thebibliography}

\end{document}